\documentclass[11pt,a4paper]{article}
\usepackage[top=2.5cm, bottom=2.2cm, left=2.5cm, right=2.5cm]{geometry}
\usepackage[english]{babel}
\usepackage[utf8]{inputenc}
\usepackage[T1]{fontenc}
\usepackage{amsmath, amssymb, amsthm}
\usepackage{mathtools, enumerate,color}
\usepackage{tocloft}
\usepackage{array}
\usepackage{booktabs}
\usepackage[hidelinks]{hyperref}
\numberwithin{equation}{section}

\newtheorem{theorem}{Theorem}[section]
\newtheorem*{theorem*}{Theorem}
\newtheorem{proposition}[theorem]{Proposition}

\newtheorem{corollary}[theorem]{Corollary}
\theoremstyle{definition}

\newtheorem{remark}[theorem]{Remark}
\newtheorem*{remark*}{Remark}

\newcommand{\R}{\mathbb{R}}
\newcommand{\C}{\mathbb{C}}

\date{August 2,   2026}
\title{A Direct Approach to Hermite Interpolation}
\author{Simon Bossoney and Marc Troyanov\\ \small \'Ecole Polytechnique F\'ed\'erale de Lausanne
 \footnote{simon.bossoney@epfl.ch, \ marc.troyanov@epfl.ch}}

\begin{document}

\maketitle

\begin{abstract}
We introduce a family of polynomials satisfying the natural duality relations for Hermite interpolation, analogous to the classical Lagrange
interpolation polynomials. They yield an explicit closed formula for the Hermite interpolant with arbitrary multiplicities, without recourse to divided differences, recursive corrections, or auxiliary Bézout identities.
We also give a detailed account of Hermite's original approach to the problem, based on an integral formula, which he used both to derive the interpolating polynomial and to estimate the interpolation error for holomorphic data.

\medskip 

\noindent 
 Keywords:  {Hermite interpolation, Lagrange interpolation, interpolation polynomials, Chinese remainder theorem, holomorphic interpolation.}

\smallskip

\noindent 
AMS Subject Class: {Primary 41A05; Secondary 13B25, 30E05.}
\end{abstract}
% Fin de l'abstract 

%____________
 %\tableofcontents
%____________
 
\section{Introduction} 
In the 1870s, the French mathematician Charles Hermite regularly corresponded with his German colleague Carl Wilhelm Borchardt, editor of the \textit{Journal für die reine und angewandte Mathematik}, founded by August Leopold Crelle in 1826. Several of these letters were published in the journal. One of them, entitled \emph{Sur la formule d'interpolation de Lagrange}, appeared in 1878~\cite{Hermite1878}. 
In this letter, Hermite considers the following interpolation problem, which we formulate here in his own words:
\begin{quote}
\emph{Find a polynomial $F(x)$, of degree at most $n-1$, that satisfies the conditions}
\begin{align*}
 F(a) &= f(a), &F'(a)=f'(a), & \quad \ldots  , 
 &F^{(\alpha-1)}(a) =f^{(\alpha-1)}(a),\\
 F(b) &= f(b), &F'(b)=f'(b), & \quad \ldots, &
 F^{(\beta-1)}(b)=f^{(\beta-1)}(b),\\
 &\vdots &      \vdots    & \  \\
 F(l) &= f(l), &F'(l)=f'(l), & \quad \ldots, &
 F^{(\lambda-1)}(l)=f^{(\lambda-1)}(l),\\
\end{align*}
\emph{where $f$ is a given function and $\alpha+\beta+\cdots+\lambda=n$.
The problem is, as we see,  completely determined and leads to a generalization of Lagrange's interpolation formula.}
\end{quote}
When all multiplicities are equal to one, Hermite's problem indeed reduces to the
classical  \textit{Lagrange interpolation problem}:  Let $K$ be a commutative field and
let $x_1,\ldots,x_r\in K$ be pairwise distinct. Given $a_1,\ldots,a_r\in K$, one seeks the unique polynomial $F\in K[t]$ of degree at most $r-1$ satisfying
$$
 F(x_i)=a_i,\qquad i=1,\ldots,r.
$$
In that special case, the problem is solved using the   \textit{Lagrange interpolation polynomials}:
\begin{equation}\label{Lagrange}
\ell_i(t)=\prod_{j\neq i}\frac{t-x_j}{x_i-x_j},
\end{equation}
they are of degree at most  $r-1$ and satisfy the duality relation $\ell_i(x_j)=\delta_{ij}$. The classical Lagrange interpolation formula follows immediately:
\begin{equation}\label{LagrangeInterpolationFormula}
 F(t)=\sum_{i=1}^r a_i \ell_i(t).
\end{equation}
The purpose of this note is to show how the same idea extends, almost verbatim, to Hermite interpolation, replacing point evaluations by Taylor polynomials of prescribed order. 
This construction also yields a direct resolution of the partial fraction decomposition for rational functions and an explicit realization of the inverse isomorphism in the Chinese Remainder Theorem for polynomial algebras.

%------------------------------------------------------------------
\section{The basic Hermite interpolation polynomials}
%------------------------------------------------------------------

We formulate the Hermite interpolation problem in a modern language.  Let $x_1,\ldots,x_r\in K$ be pairwise distinct and let
$m_1,\ldots,m_r$ be positive integers.  For each $i$, let $g_i\in K[t]$ be a given polynomial of degree $<m_i$. The collection
\begin{equation}\label{data}
 (x_i,m_i,g_i),\qquad i=1,\ldots,r,
\end{equation}
will be referred to as the \emph{interpolation data} and the $x_i$'s as the corresponding \textit{nodes}. 
The \textit{Hermite interpolation problem} is then to determine, from these data, the unique polynomial
$f\in K[t]$ of degree $<n$, where $n=m_1+\cdots+m_r$, satisfying at each node:
$$
f^{(k)}(x_i)=g_i^{(k)}(x_i), \qquad 0\le k<m_i,\quad i=1,\ldots,r,
$$
where $f^{(k)}$ denotes the $k$-th formal derivative\footnote{Equivalently, the $(m_i-1)$-jet of $f$ at $x_i$ is prescribed by $g_i$ for each $i$.} of $f$. 

\smallskip 

The existence and uniqueness of the solution is easy to prove; the method of undetermined coefficients leads indeed to a non-singular linear system (see, for example, Theorem~2.1.5.2 of \cite{Stoer}). 
The obtained polynomial is called the \textit{Hermite interpolant}, and if $K = \R$ or $\C$, and $g_i$ is the Taylor polynomial at $x_i$ of a given function $g$ with enough derivatives, then one says that the interpolation data are \textit{derived from the function} $g$.

\medskip 

Our goal is to describe a direct construction of the Hermite interpolant based on elementary polynomial identities satisfying similar duality relations as the Lagrange basis. To this end, we introduce the following notations:
\begin{equation}\label{defomega}
\omega(t)=\prod_{j=1}^r (t-x_j)^{m_j},\qquad
\phi_i(t)=\prod_{j\ne i}(t-x_j)^{m_j},\qquad
q_i(t)=\frac{\phi_i(t)}{\phi_i(x_i)},
\end{equation}
and we define the \emph{basic Hermite interpolation polynomials} associated with the interpolation data \eqref{data} by
\begin{equation}\label{defhi}
  h_i(t) =  1 - \left( 1 - q_i(t) \right)^{m_i} 
  = 1-\left(1-\prod_{j\ne i}\left(\tfrac{t-x_j}{x_i- x_j}\right)^{m_j} \right)^{m_i}.
\end{equation}
We state the fundamental property satisfied by these polynomials in the following 
\begin{proposition}\label{prop:hermite-duality} 
For each $i,j\in\{1,\ldots,r\}$, the polynomials $h_i$ satisfy
\begin{equation}\label{dualityrel}
 h_i \equiv \delta_{ij} \quad \bmod{(t-x_j)^{m_j}}.
\end{equation}
\end{proposition}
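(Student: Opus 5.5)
The plan is to split into the two cases $j\neq i$ and $j=i$, and in each case to show that the relevant polynomial is divisible by $(t-x_j)^{m_j}$.

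\emph{Case $j\neq i$.} Here we must show $h_i\equiv 0 \bmod (t-x_j)^{m_j}$. Expanding by the binomial theorem,
\[
h_i(t)=1-(1-q_i(t))^{m_i}=\sum_{k=1}^{m_i}(-1)^{k+1}\binom{m_i}{k}q_i(t)^k .
\]
Every term has $q_i$ as a factor, so $h_i=q_i\cdot P$ for some polynomial $P\in K[t]$. By definition $q_i=\phi_i/\phi_i(x_i)$, and $\phi_i$ contains the factor $(t-x_j)^{m_j}$ because $j\neq i$. The constant $\phi_i(x_i)$ is nonzero since the nodes are pairwise distinct. Hence $(t-x_j)^{m_j}$ divides $h_i$. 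This case is immediate, and the argument works over any field, since we only used the algebraic identity $1-(1-q)^{m}=q\cdot(\text{polynomial in }q)$.

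\emph{Case $j=i$.} Here we must show $1-h_i=(1-q_i)^{m_i}\equiv 0 \bmod (t-x_i)^{m_i}$. It suffices to show that $(t-x_i)$ divides $1-q_i$. This holds because $q_i(x_i)=\phi_i(x_i)/\phi_i(x_i)=1$, so $x_i$ is a root of $1-q_i$. Writing $1-q_i=(t-x_i)R(t)$, we get $(1-q_i)^{m_i}=(t-x_i)^{m_i}R^{m_i}$, which is the required divisibility.

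Neither step presents a real obstacle. The only point needing care is the choice of how to present $h_i$: in the first case we factor out $q_i$, and in the second we use the vanishing of $1-q_i$ at $x_i$, raised to the power $m_i$. In particular, no derivatives are needed, which avoids any characteristic issues with formal derivatives.
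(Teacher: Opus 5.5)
Your proof is correct and is essentially the paper's argument. The case $j=i$ is identical: $1-q_i$ vanishes at $x_i$, and you raise this to the power $m_i$. For $j\neq i$, your factorization $h_i=q_i\cdot P(q_i)$ is just the expanded form of the paper's observation that $1-q_i\equiv 1 \bmod (t-x_j)^{m_j}$, which gives $(1-q_i)^{m_i}\equiv 1$.
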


\begin{proof}
From the definition of $q_i$, it is clear that
$$
(1-q_i)\equiv0\ \bmod{(t-x_i)}
\quad\text{and}\quad
(1-q_i)\equiv1\ \bmod{(t-x_j)^{m_j}} \quad(j\ne i).
$$
Raising these congruences to the power $m_i$ gives
$$
 (1-q_i)^{m_i}\equiv0\ \bmod{(t-x_i)^{m_i}}
  \quad\text{and}\quad
 (1-q_i)^{m_i}\equiv1\ \bmod{(t-x_j)^{m_j}}.
$$
Since $h_i=1-(1-q_i)^{m_i}$, we conclude that 
$$
 h_i\equiv1\ \bmod{(t-x_i)^{m_i}} \quad\text{and}\quad
 h_i\equiv0\ \bmod{(t-x_j)^{m_j}} \quad(j\ne i).
$$
\end{proof}

\medskip

Using the fact that a  polynomial is divisible by $(t-x_j)^{m_j}$ if and only if its value and its derivatives up to order $m_j-1$ vanish at $x_j$, Proposition \ref{prop:hermite-duality} can be reformulated as follows:

\begin{corollary}\label{cor:hermite-derivatives}
For every $i,j\in\{1,\ldots,r\}$ and $0 < k<m_j$ we have 
\begin{equation}\label{hiduality}
  h_i(x_j) = \delta_{ij}, \quad \textrm{and}  \quad  h_i^{(k)}(x_j) = 0.
\end{equation}
\end{corollary}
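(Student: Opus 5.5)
The plan is to read the corollary off Proposition~\ref{prop:hermite-duality} by translating each congruence into conditions on derivatives at the node. Fix $i,j$ and set $p = h_i - \delta_{ij}$, a polynomial in $K[t]$. By Proposition~\ref{prop:hermite-duality}, $(t-x_j)^{m_j}$ divides $p$. So it suffices to show the following: if $(t-x_j)^{m_j}$ divides $p$, then $p^{(k)}(x_j)=0$ for $0\le k<m_j$. Granting this, note that $\delta_{ij}$ is a constant. Then $k=0$ gives $h_i(x_j)=\delta_{ij}$, and for $0<k<m_j$ we have $p^{(k)}=h_i^{(k)}$, so $h_i^{(k)}(x_j)=0$.

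To prove the divisibility criterion in the direction we need, write $p=(t-x_j)^{m_j}u$ with $u\in K[t]$. Apply the Leibniz rule for formal derivatives:
$$
p^{(k)}=\sum_{s=0}^{k}\binom{k}{s}\bigl((t-x_j)^{m_j}\bigr)^{(s)}u^{(k-s)}.
$$
Each factor $\bigl((t-x_j)^{m_j}\bigr)^{(s)}$ equals $\tfrac{m_j!}{(m_j-s)!}(t-x_j)^{m_j-s}$. Since $s\le k<m_j$, the exponent $m_j-s$ is at least $1$, so every term vanishes at $t=x_j$.

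The only point needing care is that $K$ is an arbitrary field, possibly of positive characteristic. For this reason I avoid Taylor expansion with $1/k!$ and argue purely through the Leibniz rule and the power rule for formal derivatives, which hold over any commutative ring. Only this direction of the criterion is needed, so the converse (which can fail in positive characteristic) plays no role.
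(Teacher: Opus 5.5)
Your proof is correct and follows the paper's route. The paper applies Proposition~\ref{prop:hermite-duality} to $h_i-\delta_{ij}$ and cites the criterion that divisibility by $(t-x_j)^{m_j}$ is equivalent to the vanishing at $x_j$ of the value and the first $m_j-1$ derivatives; you instead prove, via the Leibniz rule, the one direction actually needed, which holds in every characteristic. That is a worthwhile refinement, since the converse the paper also asserts can fail in characteristic $p$ when $m_j>p$.
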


\medskip

We also have the following identities:
\begin{corollary}\label{cor:idempotents}
The polynomials $h_i$ satisfy the following congruences modulo $\omega$:
\begin{equation}\label{eq:idempotent-relations}
  h_i^2\equiv h_i \quad (i=1,\ldots,r),
  \qquad
  h_ih_k\equiv 0 \quad  (\text{if } \  i\ne k), 
  \qquad
  \sum_{i=1}^r h_i\equiv 1.
\end{equation}
\end{corollary}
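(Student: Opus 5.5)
The plan is to reduce each congruence modulo $\omega$ to a family of congruences modulo the individual factors $(t-x_j)^{m_j}$. These reduced congruences follow at once from Proposition~\ref{prop:hermite-duality}.

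The key observation is that the factors $(t-x_j)^{m_j}$, $j=1,\ldots,r$, are pairwise coprime, since the nodes $x_j$ are pairwise distinct. Consequently a polynomial $P\in K[t]$ is divisible by $\omega=\prod_j (t-x_j)^{m_j}$ if and only if it is divisible by each $(t-x_j)^{m_j}$. One can see this either from unique factorization in $K[t]$ or by induction on $r$ using Gauss's lemma. It therefore suffices to verify each of the three identities in \eqref{eq:idempotent-relations} modulo $(t-x_j)^{m_j}$ for every $j$.

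Fix $j$. By \eqref{dualityrel} we have $h_i\equiv\delta_{ij}$ modulo $(t-x_j)^{m_j}$, and congruences modulo a fixed polynomial are compatible with sums and products. The three identities then reduce as follows.
\begin{itemize}
\item For the first, $h_i^2\equiv\delta_{ij}^2=\delta_{ij}\equiv h_i$.
\item For the second, with $i\neq k$, we get $h_ih_k\equiv\delta_{ij}\delta_{kj}=0$, because $j$ cannot equal both $i$ and $k$.
\item For the third, $\sum_i h_i\equiv\sum_i\delta_{ij}=1$.
\end{itemize}
So for $P$ equal to $h_i^2-h_i$, to $h_ih_k$, or to $\sum_i h_i-1$, the polynomial $P$ is divisible by every $(t-x_j)^{m_j}$, hence by $\omega$.

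There is no real obstacle here. The only point requiring a word of justification is the passage from "divisible by each $(t-x_j)^{m_j}$" to "divisible by their product", which rests on the coprimality of these factors. Equivalently, one can phrase the whole argument through the injectivity of the Chinese Remainder map $K[t]/(\omega)\to\prod_j K[t]/((t-x_j)^{m_j})$.
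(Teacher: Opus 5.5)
Your proposal is correct and follows the same route as the paper's proof. Both verify each identity modulo every factor $(t-x_j)^{m_j}$ using $h_i\equiv\delta_{ij}$ from Proposition~\ref{prop:hermite-duality}, then lift to $\omega$ by the pairwise coprimality of these factors. Your brief justification of that lifting step, via unique factorization or the injectivity of the Chinese Remainder map, is a welcome detail that the paper only asserts.
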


\begin{proof}
Since $x_1,\ldots,x_r$ are pairwise distinct, the polynomials $(t-x_1)^{m_1},\ldots,(t-x_r)^{m_r}$ are pairwise coprime. Therefore it suffices to verify each congruence modulo $(t-x_j)^{m_j}$ for every $j\in\{1,\ldots,r\}$.

\smallskip 

Fix $j$. By Proposition~\ref{prop:hermite-duality}, we have  $h_i\equiv\delta_{ij}\ \bmod{(t-x_j)^{m_j}}$  for every $i$.
Hence, modulo $(t-x_j)^{m_j}$,
$$
h_i^2\equiv\delta_{ij}^2=\delta_{ij}\equiv h_i,
\qquad
h_ih_k\equiv\delta_{ij}\delta_{kj}=0 \quad (i\ne k),
\qquad
\sum_{i=1}^r h_i\equiv\sum_{i=1}^r\delta_{ij}=1.
$$
Since this holds for every $j$ and the factors
$(t-x_j)^{m_j}$ are pairwise coprime, their product $\omega$
divides each of the polynomials
\[
h_i^2-h_i,\qquad h_ih_k\quad(i\ne k),\qquad
\sum_{i=1}^r h_i-1.
\]
Hence all the  congruences
\eqref{eq:idempotent-relations} hold modulo~$\omega$.
\\ 
\end{proof}

\medskip

\begin{remark}  \label{re.prophi}
Proposition \ref{prop:hermite-duality}  and Corollary \ref{cor:idempotents} hold more generally: 
Let $\mathcal{A}$ be an arbitrary commutative ring with unity.
If \  $q_i, a_i \in \mathcal{A}$ satisfy the congruences $q_i\equiv 1\ \bmod{(a_i)}$ and $q_i\equiv0\ \bmod{(a_j^{m_j})}$ for some integers $m_i \geq 1$; then $h_i \in \mathcal{A}$,  defined by 
$h_i=1-(1-q_i)^{m_i}$,  satisfy $h_i=\delta_{ij}\ \bmod (a_j^{m_j})$. 
If, moreover, $\mathcal{A}$ is a principal ideal domain and the elements $a_1,\ldots,a_r$ are pairwise coprime, then the congruences \eqref{eq:idempotent-relations} hold modulo $\alpha = \prod_{j=1}^r a_j^{m_j}$. 
\end{remark}

\medskip

\subsection*{Application to partial fraction decomposition}
The partial fraction decomposition problem, in the case where $\omega$ splits over $K$, boils down to finding polynomials $s_i\in K[t]$ with $\deg s_i<m_i$ such that
$$
\frac{1}{\omega(t)} = \sum_{i=1}^r\frac{s_i(t)}{(t-x_i)^{m_i}}.
$$
This is equivalent to finding polynomials $s_i$ satisfying the Bézout identity
$\sum_{i=1}^r s_i(t)\,\phi_i(t)=1$.
The polynomials $h_i$ defined in \eqref{defhi} provide the following explicit solution:  Since $h_i$ is  divisible by $\phi_i$ by Proposition~\ref{prop:hermite-duality}, we may define $s_i$ as the remainder of
$h_i/\phi_i$ upon division by $(t-x_i)^{m_i}$.
This polynomial satisfies
$$
 s_i\in K[t], \quad \deg(s_i) < m_i, \qquad 
 s_i(t) = \frac{h_i(t)}{\phi_i(t)}  \ \bmod (t-x_i)^{m_i}.
$$
By construction, we have 
$$
s_i\phi_i\equiv h_i\ \bmod{(t-x_i)^{m_i}},
$$
while for $j\neq i$ both  $s_i\phi_i$ and $h_i$ are congruent to $0$ modulo $(t-x_j)^{m_j}$. Hence
$$
s_i\phi_i\equiv h_i    \ \bmod{(\omega)}
$$
for every $i$. Corollary~\ref{cor:idempotents} then implies
\begin{equation}\label{consphi}
 \sum_{i=1}^r s_i(t)\,\phi_i(t)\equiv 1\ \bmod{(\omega)}.
\end{equation}
Moreover,
$$
\deg\!\left(\sum_{i=1}^r s_i\phi_i\right)\le n-1 < n = \deg(\omega),
$$
so the congruence \eqref{consphi} is in fact an equality. Dividing by $\omega(t)$, we obtain
$$
\frac{1}{\omega(t)}
=
\sum_{i=1}^r\frac{s_i(t)}{(t-x_i)^{m_i}}
=
\sum_{i=1}^r\sum_{k=1}^{m_i}\frac{a_{ik}}{(t-x_i)^k},
$$
where each polynomial $s_i$ has been expanded as $s_i(t)=\sum_{k=1}^{m_i}a_{ik}(t-x_i)^{m_i-k}$.

%-----------------------------------------------------------------------
\section{The interpolation formula and its algebraic interpretation}
%-----

We now formulate our main result: 
\begin{theorem}\label{thm:hermite-interpolation}
The unique solution $f\in K[t]$ to the Hermite interpolation problem is the  remainder modulo $\omega$ of the polynomial
\begin{equation}\label{solhermiteh}
 F(t) = \sum_{i=1}^{r} h_i(t)g_i(t).
\end{equation}
In other words, $f \in K[t]$ is the unique polynomial with $\deg f<n$, such that 
$$
f(t)\equiv F(t)  \ \bmod (\omega).
$$
\end{theorem}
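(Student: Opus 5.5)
The plan is to check the interpolation conditions directly, one node at a time, and then to use the uniqueness of the Hermite interpolant. I will use the characterization already invoked before Corollary~\ref{cor:hermite-derivatives}: the jet conditions $f^{(k)}(x_j)=g_j^{(k)}(x_j)$ for $0\le k<m_j$ are equivalent to the congruence $f\equiv g_j \bmod (t-x_j)^{m_j}$. This holds because $f-g_j$ is divisible by $(t-x_j)^{m_j}$ if and only if its derivatives of order $<m_j$ vanish at $x_j$. In characteristic $p$ that equivalence needs $m_j\le p$ or Hasse derivatives, but it is the standing convention the paper uses, so I take it as given. The problem therefore becomes: find $f$ with $\deg f<n$ and $f\equiv g_j \bmod (t-x_j)^{m_j}$ for every $j$.

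First I show that $F=\sum_i h_i g_i$ satisfies these congruences. Fix $j$. By Proposition~\ref{prop:hermite-duality}, $h_i\equiv\delta_{ij} \bmod (t-x_j)^{m_j}$ for every $i$. Multiplying by $g_i$ and summing gives
$$
F\equiv\sum_{i}\delta_{ij}g_i=g_j \bmod (t-x_j)^{m_j}.
$$
Next, let $f$ be the remainder of $F$ upon division by $\omega$. Since $(t-x_j)^{m_j}$ divides $\omega$, the congruence $f\equiv F \bmod (\omega)$ implies $f\equiv F\equiv g_j \bmod (t-x_j)^{m_j}$ for each $j$. We also have $\deg f<\deg\omega=n$, so $f$ solves the Hermite interpolation problem.

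For uniqueness I will give a self-contained argument rather than only quoting the linear-algebra result. If $f$ and $\tilde f$ both have degree $<n$ and satisfy all the congruences, then each $(t-x_j)^{m_j}$ divides $f-\tilde f$. These factors are pairwise coprime because the nodes are distinct, so their product $\omega$ divides $f-\tilde f$. A nonzero polynomial of degree $<n$ cannot be divisible by a polynomial of degree $n$, hence $f=\tilde f$. This also gives the ``in other words'' formulation: the solution is exactly the unique polynomial of degree $<n$ congruent to $F$ modulo $\omega$.

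I do not expect a real obstacle, since Proposition~\ref{prop:hermite-duality} does the essential work. The one point needing care is the translation between jet conditions and congruences, which is where the formal-derivative and characteristic issue sits. I will rely on the paper's stated equivalence there.
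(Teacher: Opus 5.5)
Your proof is correct and follows the paper's route: Proposition~\ref{prop:hermite-duality} gives $F\equiv g_j \bmod (t-x_j)^{m_j}$, reducing modulo $\omega$ preserves these congruences, and uniqueness follows from the pairwise coprimality of the factors together with the degree bound. Your caveat about characteristic $p$ is valid and is a point the paper passes over silently: the equivalence between jet conditions and divisibility needs $m_j\le p$ or Hasse derivatives.
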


Note that, when all multiplicities are equal to one, we have $h_i(t)=\ell_i(t)$ and  the formula above reduces to the classical Lagrange interpolation formula \eqref{LagrangeInterpolationFormula}.

\medskip 

\begin{proof}
Let $F(t)=\sum_{i=1}^{r}g_i(t)h_i(t)$; by proposition~\ref{prop:hermite-duality}, we have
$$
F(t)\equiv g_j(t)\ \bmod{(t-x_j)^{m_j}}
$$
for each $j$. Now let $f$ be the remainder of $F$  modulo $\omega$; we still have
$$
f\equiv g_j\ \bmod{(t-x_j)^{m_j}}.
$$
Since congruence modulo $(t-x_j)^{m_j}$ is equivalent to the 
equality of the first $m_j$ Taylor coefficients at $x_j$, we conclude that
$$
f^{(k)}(x_j)=g_j^{(k)}(x_j), \qquad 0\le k<m_j.
$$
Thus $f$ satisfies the Hermite interpolation conditions, and by construction $\deg f<n$.

\smallskip 

It remains to prove uniqueness. If $f_1$ and $f_2$ are two solutions of degree
$<n$, then $f_1-f_2$ is divisible by each $(t-x_j)^{m_j}$. Since these factors
are pairwise coprime, $f_1-f_2$ is divisible by $\omega$. But
$\deg(f_1-f_2)<n=\deg\omega$, hence $f_1-f_2=0$.

\end{proof}

\medskip 

To the best of our knowledge, the above construction of the Hermite interpolation polynomials $h_i$, and 
the resulting interpolation formula \eqref{solhermiteh},  have not appeared in the existing literature.

\medskip

\begin{remark} Theorem~\ref{thm:hermite-interpolation} and its proof also provide us with an explicit realization of the Chinese Remainder Theorem isomorphism in the case of polynomial algebras for split polynomials.  This Theorem states that if $x_1, \dots, x_r \in K$ are pairwise distinct, then  the canonical ring homomorphism
$$
K[t]/(\omega)\longrightarrow K[t]/(t-x_1)^{m_1}\times\cdots\times K[t]/(t-x_r)^{m_r}
$$
is an isomorphism, where $\omega$ is defined in \eqref{defomega}. The proof of Theorem~\ref{thm:hermite-interpolation} yields the explicit inverse
$$
\Bigl(g_1 \!\!\!\!\ \bmod{(t-x_1)^{m_1}},\ldots, g_r \!\!\!\!\ \bmod{(t-x_r)^{m_r}}\Bigr)
\longmapsto \sum_{i=1}^r h_i g_i \ \bmod{(\omega)}.
$$
Equivalently, every polynomial $F\in K[t]$ satisfies
$$
F\equiv \sum_{i=1}^r h_i g_i
\ \bmod{(\omega)},
$$
where $g_i$ denotes the remainder of $F$ modulo $(t-x_i)^{m_i}$.  Unlike the standard approach to the Chinese Remainder Theorem, the inverse is obtained here directly, without solving an auxiliary Bézout identity.

\smallskip

The relation between the Chinese Remainder Theorem and the classical Lagrange interpolation formula (i.e. the case all $m_i=1$) is well known. See e.g. P. Jolissaint~\cite{Jolissaint} for a recent discussion. 
\end{remark}

\section{Related constructions and historical remarks}
 
\subsection{Comparison with a construction by Stoer and Bulirsch} \label{sec.Stoer}

It is instructive to compare the present construction with the recursive construction
described in Stoer and Bulirsch \cite[\S 2.1.5]{Stoer}. There, the Hermite interpolant is written as
\begin{equation}\label{solhermiteL}
f(t)=\sum_{i=1}^r\sum_{k=0}^{m_i-1} g_i^{(k)}(x_i)L_{i,k}(t),
\end{equation}
where $L_{i,k}  \in K[t]$ is a polynomial of degree $< n$  associated with $x_i$ and $0 \leq k \leq m_i$.  To define it, start from the auxiliary polynomials
$$
  q_{ik}(t) = \frac{(t-x_i)^k}{k!} \; q_i(t) 
   =  \frac{(t-x_i)^k}{k!}\prod_{j\neq i}\left(\tfrac{t-x_j}{x_i-x_j}\right)^{m_j};
$$
and set $L_{i,m_i-1}=q_{i,m_i-1}$,  then construct the remaining polynomials by descending recurrence using 
$$
L_{i,k}(t) =q_{ik}(t)-\sum_{s=k+1}^{m_i-1}q_{ik}^{(s)}(x_i)L_{i,s}(t),
$$
for $k= m_i-2,m_i-3,\dots, 0$. By construction,  every polynomial $L_{i,k} $ has at most degree $n-1$. By induction, one checks that
\begin{equation}\label{Likduality}
 L_{i,k}^{(s)}(x_j) = \delta_{ij}\,\delta_{ks},
\end{equation}
for every $i,j\in\{1,\ldots,r\}$, $0\le k<m_i$, and $0\le s<m_j$, from which \eqref{solhermiteL} follows immediately.

\medskip 

Note that the duality condition \eqref{hiduality} is simpler than the above condition, but  \eqref{hiduality}  is a duality modulo 
${(t-x_j)^{m_j}}$, and our construction of the interpolant polynomial is given by 
$F=\sum_{i=1}^r g_i h_i$,which is in general not of degree $< n$,  and must be reduced modulo $\omega$.

\subsection{Hermite's original approach}  \label{sec:hermite-approach} 

Hermite's original approach deals with the complex field $K=\C$ and is based on the residue formula. Assuming that the interpolation data are derived from a holomorphic function $g$ defined in some neighborhood of a bounded, simply connected domain $\Omega\subset\mathbb C$ with rectifiable boundary that contains all the nodes, he proves the following beautiful integral representation for the Hermite interpolant~$f$:
\begin{equation}\label{integralrepr}
 f(x)=g(x)+\frac{1}{2\pi i}\int_{\partial\Omega}\frac{g(z)\omega(x)}{(x-z)\omega(z)}\,dz,
\end{equation}
where, as above, $\omega(x)=\prod_i(x-x_i)^{m_i}$.

\medskip

Let us briefly explain Hermite's proof: We first show that $f(x)$ is a polynomial of degree $<n$, as follows.
Observe that $\omega(x)-\omega(z)$ is a polynomial of degree $n-1$ in the
variables $(x,z)$ that vanishes when $z=x$; therefore
$$
\omega(x) - \omega(z) = (x-z)Q(x,z),
$$
for a uniquely defined polynomial $Q(x,z)$ of degree $n-1$. Substituting into the integrand yields
$$
\frac{g(z)\omega(x)}{(x-z)\omega(z)} = \frac{g(z)}{x-z} + \frac{g(z)Q(x,z)}{\omega(z)}.
$$
Using that $g$ is holomorphic on a neighborhood of $\Omega$, we have by Cauchy's integral formula,
$$
\frac{1}{2\pi i}\int_{\partial\Omega}\frac{g(z)}{x-z}\,dz =  -g(x),
$$
for every $x\in\Omega$. We thus have with \eqref{integralrepr}:
$$
  f(x) = \frac{1}{2\pi i}\int_{\partial\Omega}\frac{g(z)Q(x,z)}{\omega(z)}\,dz.
$$
This shows that $f$ is a polynomial in $x$ of degree at most $n-1$, since
$Q(x,z)$ is itself such a polynomial for every $z$.

\medskip

We now check that $f$ satisfies the Hermite interpolation conditions. It
will be convenient to write \eqref{integralrepr} as
\begin{equation}\label{integralrepr2}
 f(x) - g(x) = \frac{1}{2\pi i}\int_{\partial\Omega}\frac{g(z)\omega(x)}{(x-z)\omega(z)}\,dz.
\end{equation}
Observe that, at every node $x_i$, we have $\omega(x_i) = 0$, hence 
$f(x_i) = g(x_i)$. In fact we have 
$$
\left.\frac{d^k}{dx^k}\!\left(\frac{\omega(x)}{x-z}\right)\right|_{x=x_i}=0,
\qquad 0\le k<m_i,
$$
Thus $f$ and $g$ agree, along with their first $m_i-1$ derivatives, at every node $x_i$.
Combined with the degree bound above, this shows that $f$, as given by
\eqref{integralrepr}, is indeed the (unique) solution to the Hermite
interpolation problem.

\medskip

Hermite does not stop here: he also shows that, as the interpolation data grow (through additional nodes and/or higher multiplicities), the interpolating polynomial converges to the holomorphic function $g$; provided the nodes stay sufficiently far away from the boundary of $\Omega$.

The argument is the following: Let $U \subset \Omega$ be an open subset such that 
$$
  \mathrm{diam} (U) \leq \rho \;  \mathrm{dist}(U, \partial \Omega)
$$
for some positive $\rho < 1$. We assume also that  all interpolation nodes belong to $U$ and $\partial \Omega$ has finite length. 
Then for any $x\in U$ and $z \in \partial \Omega$ we have 
$$
\frac{|x-x_i|}{|z-x_i|}\le\rho.
$$
Therefore, 
$$
\left|\frac{\omega(x)}{\omega(z)}\right| =\prod_i\left(\frac{|x-x_i|}{|z-x_i|}\right)^{m_i} \le\rho^n,
$$
where $n=\sum_i m_i$. Since $g$ is continuous on the compact set $\partial\Omega$ and $x\in U$ stays at a positive distance from $\partial\Omega$, there exists $M < \infty$ such that 
$$
  \left| \frac{g(z)}{x-z} \right| \leq M,
$$
for all $x\in U$ and $z\in\partial\Omega$.
Applying this to \eqref{integralrepr2} gives for any $x\in U$
$$
|f(x)-g(x)|\le  \frac{1}{2\pi}\int_{\partial\Omega} \left|\frac{g(z)\omega(x)}{(x-z)\omega(z)}\right|\, |dz|
\leq 
\frac{M}{2\pi}\   {\mathrm{length}(\partial\Omega)} \,\rho^n,
$$
Since $\rho<1$, the right-hand side tends to $0$ as $n\to\infty$;
consequently, the interpolation error tends uniformly to zero on $U$. 

\medskip

For the special case of an entire function $g$, letting the contour $\partial\Omega$
recede to infinity, the argument yields uniform convergence on every compact subset of
$\mathbb C$; see e.g. \cite[Theorem~1.19]{Calvi2005}.

\section{Concluding remarks}

\begin{enumerate}
\item After discussing the classical Lagrange interpolation formula in \cite[p.~50]{Walsh1960},
J.L. Walsh observes that ``an analogous formula exists'' for the Hermite interpolation problem,
but leaves its construction to the reader. Theorem~\ref{thm:hermite-interpolation} may be viewed
as one possible construction.

\item In the same reference, Walsh also states Hermite's integral representation for
interpolation with arbitrary multiplicities \cite[p.~50]{Walsh1960}, see also
\cite[\S 7]{Walsh1935}. His treatment, however, differs slightly from Hermite's
original argument and remains rather concise. Section~\ref{sec:hermite-approach}
above revisits Hermite's argument in detail, making explicit both the residue
computation leading to the interpolation formula and the geometric condition
underlying the convergence estimate.

\item The degree of the polynomial $h_i$ is $d_i = m_i(n-m_i)$, which can be
significantly larger than $n$. From a computational viewpoint, it is thus useful to
reduce each $h_i$ modulo $\omega$ before forming the sum \eqref{solhermiteh}.

\item Hermite's convergence argument relies on a geometric hypothesis on the location of the interpolation nodes relative to the boundary of the domain. Without such a restriction, uniform convergence might fails, as was observed by Runge's famous counterexample (1901) for polynomial interpolation at equally spaced nodes on some interval, see \cite{Epperson}.

\item Using the polynomials $h_i$, one produces a convenient direct construction of the spectral projectors of a square matrix. See \cite {BT2} for a detailed discussion.
\end{enumerate}

\subsubsection*{Acknowledgements}
The authors thank Marco Picasso for pointing out the reference~\cite{Stoer}.

%-----------------------------------------------------------------------


\begin{thebibliography}{9}

\bibitem{BT2}
S.~Bossoney and M.~Troyanov,
\textit{A Direct Polynomial Approach to Spectral Decomposition},  (2026),  \url{https://arxiv.org/abs/2607.20218}


\bibitem{Calvi2005}
J.-P. Calvi,
\textit{Lectures on Multivariate Polynomial Interpolation},
Lecture notes, Hanoi, 2005.
Available at:
\url{https://www.math.univ-toulouse.fr/~calvi/res_fichiers/MPI.pdf}

\bibitem{Epperson}
J. F. Epperson,
\textit{On the Runge Example},
Amer. Math. Monthly
\textbf{94} (1987), no.~4, 329--341.


\bibitem{Hermite1878}
C. Hermite
\textit{Sur la formule d'interpolation de Lagrange.} Journal für die reine und angewandte Mathematik (1878)
Volume: 84, pp. 70-79.
 

\bibitem{Jolissaint}
P. Jolissaint,
\textit{Interplay between the Chinese Remainder Theorem and the Lagrange Interpolation Formula},
Elemente der Mathematik,
\textbf{81} (2026), 22--25.

\bibitem{Stoer}
J. Stoer and R.~Z. Bulirsch,  \textit{Introduction to numerical analysis},
Texts in Applied Mathematics, 12, Springer, New York, 2002.


\bibitem{Walsh1935}
J. L. Walsh,
\textit{Approximation by polynomials in the complex domain}, 
Mémorial des sciences mathématiques, fascicule 73 (1935).

\bibitem{Walsh1960}
J. L. Walsh,
\textit{Interpolation and Approximation by Rational Functions in the Complex Domain},
American Mathematical Society Colloquium Publications,
Vol.~20, Various editions.

\end{thebibliography}
\end{document}